\documentclass[a4paper,11pt,twoside]{amsart}
\usepackage[margin=1in]{geometry}
\usepackage{mathtools}
\usepackage{amsfonts}
\usepackage{amsthm}
\usepackage{amsmath}
\usepackage{amssymb}
\usepackage{enumerate}
\usepackage{hyperref}

\newcommand{\Z}{\mathbb{Z}}
\newcommand{\cyl}{\mathrm{cyl}}

\newtheorem{theorem}{Theorem}
\newtheorem{proposition}{Proposition}

\begin{document}
\title{A concise proof of cylindric Schur positivity}
\author{Alexander Dobner}
\begin{abstract}
Cylindric Schur functions are a family of symmetric functions that generalize skew Schur functions. We give a short proof that skew cylindric Schur functions expand positively in terms of non-skew cylindric Schur functions. In particular, we show that the expansion coefficients are fusion coefficients.
\end{abstract}
\maketitle
\vspace{-1em}

\section{Introduction}
The cylindric Schur functions are a family of symmetric functions which are generating functions for cylindric semistandard Young tableaux. They were introduced by Postnikov \cite{postnikov2005} in his study of the quantum cohomology of Grassmannians. They are also related to fusion coefficients in type A. (The connection between quantum cohomology of Grassmannians and fusion coefficients mirrors the connection between ordinary cohomology of Grassmannians and Littlewood-Richardson coefficients.)

McNamara \cite{mcnamara2006} conjectured that the skew cylindric Schur functions expand positively in terms of the non-skew cylindric Schur functions. This was first proved by Lee \cite{lee2019}, and a different proof was given by Korff and Palazzo \cite{korff2020p}. Both of these proofs are rather lengthy. In this note we give a very short proof of this result.

\begin{theorem}
Let $N$ and $L$ be positive integers. For any $(N,L)$-cylindric skew shape $\lambda/\mu$ we have the following expansion of the skew $(N,L)$-cylindric Schur function $s^\cyl_{\lambda/\mu}$ in terms of non-skew $(N,L)$-cylindric Schur functions:
\[s^\cyl_{\lambda/\mu} = \sum_{\nu} d_{\mu\nu}^{\lambda} s^\cyl_{\nu}.\]
The coefficients $d_{\mu\nu}^{\lambda}$ are the fusion coefficients of rank $N$ and level $L$. In particular, they are nonnegative integers.
\end{theorem}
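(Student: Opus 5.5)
We will derive the expansion from a cylindric Cauchy-type identity together with the fusion ring being a quotient of the ring of symmetric polynomials. Throughout, the symmetric functions are in infinitely many variables $x=(x_1,x_2,\dots)$. We use the quotient map $\pi:\Lambda_N\to \mathcal{F}_{N,L}$, where $\Lambda_N=\Z[z_1,\dots,z_N]^{S_N}$ and $\mathcal{F}_{N,L}$ is the Verlinde (fusion) ring of $\widehat{\mathfrak{sl}}_N$ (or $\widehat{\mathfrak{gl}}_N$) at level $L$. The quotient $\mathcal{F}_{N,L}$ has a $\Z$-basis $\{[\nu]\}$ indexed by partitions $\nu$ in the $N\times L$ box; the images $\pi(s_\nu(z))$ are these basis elements up to sign/zero via affine straightening, and $[\mu][\nu]=\sum_\lambda d^\lambda_{\mu\nu}[\lambda]$.

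The key identity to establish is the following. Take a partition $\mu$ in the box and cylindric skew shapes $\lambda/\mu$ with fixed "outer" $\lambda$ ranging over box partitions after the appropriate normalization. In $\mathcal{F}_{N,L}[[x]]$ we want
\[
[\mu]\cdot \pi\Big(\prod_{i\ge1}\prod_{j=1}^N \frac{1}{1-x_iz_j}\Big)=\sum_{\lambda} s^\cyl_{\lambda/\mu}(x)\,[\lambda].
\]
To prove it, first expand the Cauchy kernel: by the ordinary Cauchy identity it equals $\sum_\kappa s_\kappa(x)s_\kappa(z)$. Alternatively, and more usefully, factor it one variable at a time. Since
\[
\prod_j (1-x_iz_j)^{-1}=\sum_{r\ge0}h_r(z)x_i^r,
\]
it suffices by induction on the number of variables (cylindric tableaux decompose into horizontal-strip layers) to show the Pieri rule in $\mathcal{F}_{N,L}$:
\[
[\mu]\,\pi(h_r)=\sum [\lambda],
\]
where the sum runs over those $\lambda$ for which $\lambda/\mu$ is a cylindric horizontal strip of size $r$ on the $(N,L)$ cylinder, each with multiplicity one. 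This is the quantum/fusion Pieri rule (Bertram--Ciocan-Fontanine--Fulton, or via Goodman--Wenzl). I would verify it directly by computing $s_\mu h_r$ via the classical Pieri rule and then applying affine straightening in the quotient. The step I expect to be the main obstacle is exactly matching the cylindric horizontal strips with the surviving terms after straightening, including the rotation that shifts $\lambda$ by a "loop" when boxes wrap around the cylinder.

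Given the identity, we expand the left side a second way: $[\mu]\sum_\kappa s_\kappa(x)\pi(s_\kappa(z))$. The classical Cauchy identity with cylindric restriction, i.e. the $\mu=\emptyset$ case of the identity above, gives
\[
\pi\Big(\prod_{i,j}(1-x_iz_j)^{-1}\Big)=\sum_\nu s^\cyl_\nu(x)[\nu].
\]
Multiplying by $[\mu]$ and using the fusion product yields $\sum_\lambda\big(\sum_\nu d^\lambda_{\mu\nu}s^\cyl_\nu\big)[\lambda]$. Comparing coefficients of the basis element $[\lambda]$ gives the theorem. Nonnegativity of the $d^\lambda_{\mu\nu}$ is the standard fact that fusion coefficients are multiplicities (tensor product multiplicities in the semisimplified category, or via the Kac--Walton formula). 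Finally, we must check that every $(N,L)$-cylindric skew shape arises, up to the cylindric shift symmetry under which $s^\cyl$ is invariant, as some $\lambda/\mu$ with $\lambda,\mu$ in the box. This bookkeeping is routine: normalize by rotating so that $\mu$ lies in the box.
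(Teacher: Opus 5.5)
The overall strategy is the paper's: iterate the fusion Pieri rule to expand a generating function in the fusion basis, multiply by $[\mu]$, and compare coefficients. But your key identity is false as stated.

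You use the full Cauchy kernel and assert the Pieri rule $[\mu]\pi(h_r)=\sum[\lambda]$ for every $r\ge 0$. A cylindric horizontal strip has at most $L$ boxes, so for $r>L$ this rule asserts $[\mu]\pi(h_r)=0$, and that is wrong. Take the paper's ring with $N=2$, $L=1$. There $I^{(2,1)}=(h_2)$, since $s_{(k+2,k)}=e_2^k h_2$, so $e_2\equiv e_1^2$. The image of $h_3=e_1^3-2e_1e_2$ is then $H_3=-e_1^3=-S_{(2,1)}\neq 0$. (For $N=1$ it is even simpler: $\Lambda^{(1,L)}=\Z[z]$ and $H_r=z^r\neq 0$ for all $r$.)

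Consequently $\pi\bigl(\prod_{i,j}(1-x_iz_j)^{-1}\bigr)=\sum_\alpha H_\alpha m_\alpha(x)$ contains nonzero terms with $\alpha_1>L$. By contrast, $\sum_\nu s^\cyl_\nu(x)[\nu]$ contains no monomial $m_\alpha$ with $\alpha_1>L$. For $N=2$, $L=1$ the coefficients of $m_{(3)}(x)$ on the two sides are $-S_{(2,1)}$ and $0$.

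The missing observation is the first line of the paper's proof. Goodman--Wenzl's Pieri rule holds only for $0\le r\le L$, and $K^\cyl_{\lambda/\mu,\alpha}=0$ whenever $\alpha_1>L$, so only $H_\alpha$ with $\alpha_1\le L$ should enter. Replace the kernel by the truncated product $\prod_i\sum_{r=0}^{L}h_r(z)x_i^r$, or compare $m_\alpha$-coefficients one $\alpha$ at a time. Your argument then becomes the paper's proof, and the Schur expansion $\sum_\kappa s_\kappa(x)s_\kappa(z)$ plays no role.

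Second, the ring is misidentified, and this breaks your closing step. The $d^\lambda_{\mu\nu}$ in the theorem are the structure constants of $\Lambda^{(N,L)}=\Lambda^{(N)}/I^{(N,L)}$ in the basis $\{S_\lambda\}$. This basis is indexed by \emph{all} $(N,L)$-partitions ($\ell(\lambda)\le N$, $\lambda_1-\lambda_N\le L$), an infinite set, not by partitions in the $N\times L$ box. In that ring:
\begin{itemize}
\item the $\lambda$ and $\mu$ of the theorem are themselves basis labels;
\item the Pieri rule is stated directly in terms of $(N,L)$-horizontal strips, with no wrapping or rotation to track, so your anticipated main obstacle disappears once you cite it;
\item no normalization is needed.
\end{itemize}

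Your normalization claim is in fact false. The shift symmetries preserve $|\lambda/\mu|$, as they must since $s^\cyl_{\lambda/\mu}$ is homogeneous of that degree. A shape with both $\lambda$ and $\mu$ in the $N\times L$ box has size at most $NL$. Yet $\lambda=(k^N)$, $\mu=\emptyset$ is $(N,L)$-cylindric of size $kN$ for every $k$, so it cannot be normalized into the box.

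If you want a finite-rank quotient instead (imposing $e_N=1$, or quantum cohomology at $q=1$), distinct $\lambda$ get identified. You would then have to separate them by $x$-degree and prove that the resulting structure constants coincide with the theorem's $d^\lambda_{\mu\nu}$. The proposal does neither.
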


This theorem naturally generalizes the classical fact that the skew Schur functions expand positively in terms of non-skew Schur functions. Indeed, when the level $L$ is sufficiently large (e.g. $L \geq \lambda_1$), the cylindric Schur function $s^\cyl_{\lambda/\mu}$ coincides with the ordinary skew Schur function $s_{\lambda/\mu}$, and the fusion coefficients $d_{\mu\nu}^{\lambda}$ in the sum coincide with the ordinary Littlewood-Richardson coefficients $c_{\mu\nu}^{\lambda}$.

\section{Definitions and proof}
\subsection{Partitions and cylindric tableaux}
A \emph{partition} is a weakly decreasing sequence of nonnegative integers $\lambda = (\lambda_1, \lambda_2, \ldots)$ with finitely many nonzero terms. The \emph{size} of $\lambda$ is $|\lambda| = \sum_i \lambda_i$. The \emph{length} of $\lambda$ is $\ell(\lambda) = \max\{i : \lambda_i > 0\}$. The \emph{empty partition} is $\emptyset = (0,0,\ldots)$.

A \emph{skew shape} is an ordered pair of partitions $(\mu,\lambda)$ such that $\mu_i \leq \lambda_i$ for all $i$. We write this ordered pair as $\lambda/\mu$. The \emph{size} of $\lambda/\mu$ is $|\lambda/\mu| = |\lambda| - |\mu|$. A \emph{horizontal strip} is a skew shape $\lambda/\mu$ such that $\lambda_1 \geq \mu_1 \geq \lambda_2 \geq \mu_2 \geq \cdots$. (Note: in terms of Young diagrams, $\lambda/\mu$ is a horizontal strip if it contains at most one box in each column.)

Fix a pair of positive integers $(N,L)$ throughout the rest of the paper. An \emph{$(N,L)$-partition} is a partition $\lambda$ such that $\ell(\lambda) \leq N$ and $\lambda_1-\lambda_N \leq L$. An \emph{$(N,L)$-horizontal strip} is a horizontal strip $\lambda/\mu$ such that $\ell(\lambda) \leq N$ and $\lambda_1-\mu_N \leq L$. Note that if $\lambda/\mu$ is an $(N,L)$-horizontal strip then $\lambda$ and $\mu$ are necessarily $(N,L)$-partitions and $|\lambda/\mu| \leq L$.

A skew shape $\lambda/\mu$ is \emph{$(N,L)$-cylindric} if both $\lambda$ and $\mu$ are $(N,L)$-partitions. If $\lambda/\mu$ is $(N,L)$-cylindric and $\alpha=(\alpha_1,\ldots,\alpha_r)$ is a vector of nonnegative integers, an \emph{$(N,L)$-cylindric semistandard tableau} of shape $\lambda/\mu$ and weight $\alpha$ is a sequence of partitions $\mu = \nu^{(0)}, \nu^{(1)}, \ldots, \nu^{(r)} = \lambda$ such that $\nu^{(i)}/\nu^{(i-1)}$ is an $(N,L)$-horizontal strip of size $\alpha_i$ for each $i$. Let $K^\cyl_{\lambda/\mu,\alpha}$ denote the number of $(N,L)$-cylindric semistandard tableaux of shape $\lambda/\mu$ and weight $\alpha$. This quantity is nonzero only if $\alpha_i \leq L$ for each $i$, and $|\lambda/\mu| = \alpha_1+\cdots+\alpha_r$. Let $K^\cyl_{\lambda,\alpha} = K^\cyl_{\lambda/\emptyset,\alpha}$.

\subsection{Cylindric Schur functions}
Let $\Lambda$ denote the ring of symmetric functions (in infinitely many variables, with coefficients in $\Z$). For any partition $\lambda$, let $s_{\lambda}, m_{\lambda}$, and $h_{\lambda}$ denote the Schur function, monomial symmetric function, and complete homogeneous symmetric function corresponding to $\lambda$. Let $h_k$ denote the complete homogeneous symmetric function of degree $k$.

Given an $(N,L)$-cylindric skew shape $\lambda/\mu$, the \emph{$(N,L)$-cylindric Schur function} $s^\cyl_{\lambda/\mu}$ is defined as
\[s^\cyl_{\lambda/\mu} = \sum_\alpha K^\cyl_{\lambda/\mu,\alpha} m_\alpha\]
where the sum is over all partitions $\alpha$. Let $s^\cyl_{\lambda} = s^\cyl_{\lambda/\emptyset}$.

\subsection{The fusion ring}

Let $\Lambda^{(N)}$ denote the ring of symmetric polynomials in $N$ variables with coefficients in $\Z$. Let $I^{(N,L)}$ denote the ideal of $\Lambda^{(N)}$ generated by the Schur polynomials $s_{\lambda}$ where $\lambda_1-\lambda_N = L+1$. Let $\Lambda^{(N,L)} = \Lambda^{(N)}/I^{(N,L)}$. This is the \emph{fusion ring} (see \cite{morse2012s}).

For any partition $\lambda$, let $S_\lambda$ denote the image of the Schur polynomial $s_\lambda$ in $\Lambda^{(N,L)}$. Similarly, let $H_\lambda$ (resp. $H_k$) denote the image of the complete homogeneous symmetric function $h_\lambda$ (resp. $h_k$) in $\Lambda^{(N,L)}$.

Goodman and Wenzl \cite{goodman1990w} proved various properties of the ring $\Lambda^{(N,L)}$. In particular, they showed that the following hold.
\begin{enumerate}[(i)]
    \item The collection $\{S_\lambda : \text{$\lambda$ is an $(N,L)$-partition}\}$ is a $\Z$-basis for $\Lambda^{(N,L)}$.
    \item The structure constants for this basis are nonnegative integers $d_{\mu\nu}^{\lambda}$. (These are the \emph{fusion coefficients} of rank $N$ and level $L$.)
    \item For any $(N,L)$-partition $\eta$ and any $0 \leq k \leq L$, we have $S_{\eta} H_k = \sum_{\rho} S_{\rho}$ where the sum is taken over all partitions $\rho$ such that $\rho/\eta$ is an $(N,L)$-horizontal strip of size $k$. (This is the \emph{fusion Pieri rule}.)
\end{enumerate}

By iteratively applying the fusion Pieri rule, we find that for any $(N,L)$-partition $\eta$ and any partition $\alpha$ satisfying $\alpha_1 \leq L$,
\begin{equation} \label{eq:fusion-pieri}
    S_{\eta} H_{\alpha} = \sum_{\rho} K^{\cyl}_{\rho/\eta, \alpha} S_{\rho}
\end{equation}
where the sum is taken over $(N,L)$-partitions $\rho$ such that $\rho/\eta$ is a skew shape.

\subsection{Proof of Theorem 1}

\begin{proposition}
For any $(N,L)$-partitions $\lambda,\mu$ and any partition $\alpha$ we have
\[
K^\cyl_{\lambda/\mu,\alpha} = \sum_{\nu} d^\lambda_{\mu\nu} K^\cyl_{\nu,\alpha}
\]
where the sum is taken over all $(N,L)$-partitions $\nu$.
\end{proposition}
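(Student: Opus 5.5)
Both sides will be computed as coefficients of $S_\lambda$ in a single element of the fusion ring, $S_\mu H_\alpha$. First the trivial cases. If some $\alpha_i > L$, then every term on both sides vanishes, since $K^\cyl$ is zero unless each part of the weight is at most $L$. So assume $\alpha_1 \leq L$. (If $\alpha$ is merely a vector rather than a partition, we reduce to the partition case later.)

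The plan is to expand $S_\mu H_\alpha$ in the basis $\{S_\rho\}$ of property (i) in two different ways.

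\emph{First expansion.} Apply \eqref{eq:fusion-pieri} with $\eta=\mu$:
\[S_\mu H_\alpha = \sum_\rho K^\cyl_{\rho/\mu,\alpha} S_\rho.\]
The coefficient of $S_\lambda$ is $K^\cyl_{\lambda/\mu,\alpha}$. If $\lambda/\mu$ is not a skew shape, then $\lambda$ does not occur in this sum, and $K^\cyl_{\lambda/\mu,\alpha}=0$ anyway, since no chain of horizontal strips can go from $\mu$ to $\lambda$.

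\emph{Second expansion.} Apply \eqref{eq:fusion-pieri} with $\eta=\emptyset$, using $S_\emptyset=1$:
\[H_\alpha = \sum_\nu K^\cyl_{\nu,\alpha} S_\nu,\]
summed over $(N,L)$-partitions $\nu$. Multiply by $S_\mu$ and use property (ii), $S_\mu S_\nu = \sum_\lambda d^\lambda_{\mu\nu} S_\lambda$. This gives
\[S_\mu H_\alpha = \sum_\lambda \Big(\sum_\nu d^\lambda_{\mu\nu} K^\cyl_{\nu,\alpha}\Big) S_\lambda.\]

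Comparing the coefficients of $S_\lambda$ in the two expansions, which is legitimate by the basis property (i), yields the Proposition.

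The only delicate point is bookkeeping. First, the definition of $K^\cyl$ allows $\alpha$ to be an arbitrary vector, whereas \eqref{eq:fusion-pieri} is stated for partitions. If needed, this is handled by noting that $H_\alpha$ depends only on the multiset of the $\alpha_i$. Hence the expansion of $S_\eta H_\alpha$ gives the same coefficients for every ordering of $\alpha$, and both $K^\cyl$ counts are thereby symmetric in $\alpha$. In any case, only partitions $\alpha$ are needed for Theorem 1. Second, one must make sure the index sets match: the sums are over $(N,L)$-partitions, which are exactly the basis elements. With these checks the argument is a direct coefficient comparison, and I expect no real obstacle.
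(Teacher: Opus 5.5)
Your proposal is correct and is essentially the paper's proof: expand $S_\mu H_\alpha$ once by the iterated fusion Pieri rule with $\eta=\mu$, and once by multiplying the $\eta=\emptyset$ expansion by $S_\mu$ and using the structure constants $d^\lambda_{\mu\nu}$; then compare coefficients in the basis $\{S_\lambda\}$. Your extra bookkeeping is harmless and is left implicit in the paper; it covers the case where $\lambda/\mu$ is not a skew shape and weights that are not partitions.
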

\begin{proof}
If $\alpha_1 > L$ then both sides are zero, so we may assume $\alpha_1 \leq L$.
Applying \eqref{eq:fusion-pieri} in the $\eta=\emptyset$ case, we get $H_{\alpha} = \sum_{\nu} K^\cyl_{\nu,\alpha} S_{\nu}$. Multiplying the left and right sides by $S_{\mu}$, we get
\begin{align*}
S_\mu \cdot (\text{left side}) &= S_{\mu} H_{\alpha} = \sum_{\lambda} K^\cyl_{\lambda/\mu,\alpha} S_{\lambda} \\
S_\mu \cdot (\text{right side}) &= S_{\mu} \sum_{\nu} K^\cyl_{\nu,\alpha} S_{\nu} = \sum_{\nu} \sum_{\lambda}  K^\cyl_{\nu,\alpha} d^\lambda_{\mu\nu} S_{\lambda}.
\end{align*}
Since the $S_{\lambda}$ form a basis for $\Lambda^{(N,L)}$, we can equate coefficients to get the desired result.
\end{proof}

Proposition 1 immediately implies Theorem 1 by multiplying both sides of the equation by $m_{\alpha}$ and summing over $\alpha$.


\bibliographystyle{plain}
\bibliography{references}

\end{document}